\newtheorem{thm}{Theorem}%[section]
\newtheorem{prop}[thm]{Proposition}
\theoremstyle{definition}
\theoremstyle{remark}
\newcommand{\R}{{\mathbb R}}
\newcommand{\Rd}{{{\mathbb R}^d}}
\begin{document}

\title{Uniqueness for the continuous wavelet transform}%
\author{H.-Q. Bui and R. S. Laugesen}%
\email{Huy-Qui.Bui@canterbury.ac.nz,Laugesen@illinois.edu}%

%\thanks{}%
\subjclass[2000]{Primary 42C40. Secondary 46F12.}
\keywords{Continuous wavelet, injectivity, tempered distribution, polynomial}

\date{\today}
%\dedicatory{}%
%\commby{}%
% ----------------------------------------------------------------
\begin{abstract}
Injectivity of the continuous wavelet transform acting on a square integrable signal is proved under weak conditions on the Fourier transform of the wavelet, namely that it is nonzero somewhere in almost every direction. 

For a bounded signal (not necessarily square integrable), we show that if the continuous wavelet transform vanishes identically, then the signal must be  constant. 
\end{abstract}
\maketitle
% ----------------------------------------------------------------

\section{\bf Introduction}

Uniqueness for the Fourier transform acting on an integrable (or square integrable) function means that $\widehat{f}=0$ implies $f=0$. In other words, the Fourier transform is injective. For distributions, a related statement says that if the Fourier transform $\widehat{f}$ is supported at the origin then $f$ is a polynomial. In particular, if a bounded function has distributional Fourier transform supported at the origin, then it is a constant function. 

This note establishes analogous uniqueness results for the continuous wavelet transform. Given a function $\psi$ (which we call a \emph{wavelet}) and a function $f$ (which we call a \emph{signal}), the \emph{continuous wavelet transform} of $f$ with respect to $\psi$ is the function
\[
(W_\psi f)(s,t) = \langle f , \psi_{s,t} \rangle = \int_\Rd f(x) \overline{\psi_{s,t}(x)} \, dx , \qquad s>0 , \quad t \in \Rd ,
\]
where
\[
\psi_{s,t}(x) = \frac{1}{s^{d/2}} \psi \Big( \frac{x-t}{s} \Big) .
\]
Notice $s$ denotes the scale, and $t$ the translation. 

The scale and translation parameters vary continuously and so one calls $W_\psi$ the ``continuous'' wavelet transform, in distinction to the ``discrete'' wavelet transform which restricts to dyadic scales $s=2^j$ and translations $t=2^j k$. For more on wavelet transforms, readers may consult the texts of Daubechies \cite{D92}, 
%Hernandez and Weiss \cite{HW96}, 
Holschneider \cite{H95}, Mallat \cite{M09}, Meyer \cite{M92} and Pathak \cite{P09}. For precise relations between the continuous and discrete wavelet transforms, see Laugesen's work on translational averaging \cite{L01,L02}. 

We will present four uniqueness (or injectivity) results for the continuous wavelet transform. The first result deals with signals in $L^2$, under slightly weaker assumptions than the Calder\'{o}n admissibility condition. The second result handles signals in $L^p$ for $1 \leq p<\infty$. The third treats $L^\infty$, and the case of polynomially bounded signals. The fourth result considers tempered distributions. 

Our motivation comes from work of Sun and Sundararajan \cite{SS11} in theoretical economics. There the signal $f$ is a mixed partial derivative of the characteristic function of some \emph{attribution problem}.  Such problems arise in cooperative game theory as cost-sharing problems, in investment finance as performance analyses of investment portfolios, and in operations research settings in the analysis of production process performance. The wavelet $\psi$ represents the difference between two path-generated attribution methods.  With the help of our wavelet uniqueness results, these authors show, roughly speaking, that if two different path-generated attribution methods yield the same attributions, then the characteristic function must lie in some specific constrained class.

\section{\bf Wavelet uniqueness for square integrable signals}

We say that a function $\lambda$ on $\Rd$ is \emph{nontrivial in direction $\xi$} (where $\xi$ is a unit vector) if the set $\{ r>0 : \lambda(r\xi) \neq 0 \}$ has positive measure. For example, if $\lambda$ is continuous then nontriviality in a direction simply means $\lambda$ is nonzero at some point on the ray in that direction. 

Our first result treats uniqueness for the continuous wavelet transform in $L^2$.
\begin{prop} \label{integrable}
Assume $f, \psi \in L^2(\Rd)$. If 
\[
\langle f , \psi_{s,t} \rangle = 0 \qquad \text{for all $s>0 , \quad t \in \Rd$,}
\]
(in other words if $W_\psi f \equiv 0$), then 
\begin{equation} \label{underlying}
\int_0^\infty |\widehat{f}(r\xi)|^2 r^{d-1} \, dr \, \int_0^\infty |\widehat{\psi}(s\xi)|^2 s^{d-1} \, ds = 0
\end{equation}
for almost every unit vector $\xi$ in $\Rd$. 

In particular, if $W_\psi f \equiv 0$ then nontriviality of $\widehat{\psi}$ in almost every direction implies $f = 0$ a.e.
\end{prop}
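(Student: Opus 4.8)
The plan is to pass to the Fourier side in the translation variable $t$ and exploit the convolution structure of $W_\psi f(s,\cdot)$. Writing $\psi_s(x) = s^{-d/2}\psi(x/s)$, so that $\psi_{s,t}(x) = \psi_s(x-t)$, the transform becomes a convolution in $t$, namely $W_\psi f(s,t) = (f * \widetilde{\psi_s})(t)$ with $\widetilde{\psi_s}(x) = \overline{\psi_s(-x)}$. For each fixed $s>0$ this is a continuous function of $t$ that vanishes identically by hypothesis, so its Fourier transform vanishes. Since $f,\psi_s \in L^2(\Rd)$ their Fourier transforms lie in $L^2$, the product $\widehat{f}\,\overline{\widehat{\psi_s}}$ lies in $L^1$, and the convolution theorem identifies it as the Fourier transform of $W_\psi f(s,\cdot)$. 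Using the scaling rule $\widehat{\psi_s}(\omega) = s^{d/2}\widehat{\psi}(s\omega)$ and discarding the harmless factor $s^{d/2}$, I obtain the pointwise relation that for each fixed $s>0$, $\widehat{f}(\omega)\widehat{\psi}(s\omega) = 0$ for almost every $\omega \in \Rd$.

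The subtlety now is that the exceptional null set depends on $s$, so I cannot simply fix one value of $s$. Instead I integrate in $s$ and invoke Tonelli. The target is the integral over the unit sphere of the product appearing in \eqref{underlying}, which after expanding and passing to polar coordinates $\omega = r\xi$ reads as a triple integral of $|\widehat{f}(r\xi)|^2|\widehat{\psi}(s\xi)|^2 r^{d-1}s^{d-1}$ in $(r,s,\xi)$. Here I perform the change of variables $s = \lambda r$, fixing the ratio $\lambda = s/r$, which converts the two independent radial variables at the same direction into a single point $\omega = r\xi$ together with a dilation $\lambda\omega$. Reassembling the $r$ and $\xi$ integrals back into an integral over $\Rd$ (absorbing the surplus factor $r^d = |\omega|^d$), the whole expression becomes $\int_0^\infty \lambda^{d-1}\int_{\Rd}|\widehat{f}(\omega)|^2|\widehat{\psi}(\lambda\omega)|^2|\omega|^d\,d\omega\,d\lambda$. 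For each fixed $\lambda$ the inner integral vanishes by the pointwise relation above, so the entire triple integral is zero.

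Since the integrand over the sphere is nonnegative and its total integral is zero, the product in \eqref{underlying} must vanish for almost every direction $\xi$, which is the first assertion. For the final statement, nontriviality of $\widehat{\psi}$ in almost every direction means exactly that $\int_0^\infty|\widehat{\psi}(s\xi)|^2 s^{d-1}\,ds > 0$ for almost every $\xi$; combined with \eqref{underlying} this forces $\int_0^\infty|\widehat{f}(r\xi)|^2 r^{d-1}\,dr = 0$ for almost every $\xi$, hence $\widehat{f} = 0$ almost everywhere on $\Rd$ by polar coordinates, and therefore $f = 0$ a.e. by Plancherel.

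I expect the main obstacle to be the bookkeeping in the second paragraph: producing the weights $r^{d-1}$ and $s^{d-1}$ correctly requires the dilation substitution $s = \lambda r$ rather than a naive attempt to fix $s$, and one must check that all manipulations are legitimate. This is guaranteed because every integrand is nonnegative, so Tonelli applies throughout and no integrability need be verified in advance. A secondary point requiring care is the Fourier-transform step for $L^2$ convolutions, resolved by noting that $f*\widetilde{\psi_s}$ is continuous and that $\widehat{f}\,\overline{\widehat{\psi_s}} \in L^1$.
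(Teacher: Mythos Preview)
Your proof is correct and follows essentially the same route as the paper's: both pass to the Fourier side to obtain $\widehat{f}(\omega)\overline{\widehat{\psi}(s\omega)}=0$ a.e.\ for each fixed $s$ (you via the convolution theorem, the paper via Parseval and the modulation identity), then integrate against $|\omega|^d s^{d-1}$, convert to polar coordinates, and perform the dilation substitution $s\leftrightarrow s/r$ (your $s=\lambda r$) to decouple the radial variables. The only cosmetic difference is that you run the change of variables backward, starting from the target spherical integral and reducing it to the known pointwise vanishing, whereas the paper proceeds forward.
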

In dimension $d=1$, the assumption that $\widehat{\psi}$ is nontrivial in almost every direction means that $\widehat{\psi}$ is nontrivial on each side of the origin: the two sets $ \{ r>0 : \widehat{\psi}(r) \neq 0 \}$ and $ \{ r>0 : \widehat{\psi}(-r) \neq 0 \}$ both have positive measure. 
This nontriviality assumption on the Fourier transform is the standard \emph{Tauberian condition} in harmonic analysis.

The symmetry of equation \eqref{underlying} reflects the interchangeability of the signal and the wavelet, which one sees by a simple change of variable:
\[
\langle f , \psi_{s,t} \rangle = \langle f_{1/s,-t/s} , \psi \rangle 
\]

\subsection*{Relation to the Calder\'{o}n condition.}  The Calder\'{o}n admissibility condition for continuous wavelets says that 
\[
\int_0^\infty |\widehat{\psi(s\xi)}|^2 \, \frac{ds}{s} = 1
\]
for almost every unit vector $\xi$ (see \cite[Section 2.4]{D92}, \cite{LWWW02}). This condition implies the hypothesis in Proposition~\ref{integrable} that $\widehat{\psi}$ is nontrivial in almost every direction, and indeed is stronger than that hypothesis because our $\widehat{\psi}$ need not vanish at the origin (or can vanish so slowly there that the Calder\'{o}n integral diverges). 

On the other hand, the Calder\'{o}n condition guarantees more than just uniqueness for the wavelet transform: it guarantees a Plancherel formula 
\[
\lVert f \rVert_2^2 = \int_0^\infty \int_\Rd |\langle f , \psi_{s,t} \rangle|^2 \, dt \, \frac{ds}{s^{d+1}} ,
\]
and hence (by polarization) a reproducing formula. Thus our Proposition assumes less and obtains less than the standard theory based on the Calder\'{o}n condition. 

\begin{proof}[Proof of Proposition~\ref{integrable}]
Define the Fourier transform with $2\pi$ in the exponent:
\[
\widehat{\psi}(\omega) = \int_\Rd \psi(x) e^{-2\pi i \omega \cdot x} \, dx .
\] 
Then by Parseval's identity and the vanishing of the wavelet transform we have
\[
0 
= \langle f , \psi_{s,t} \rangle \\
= \langle \widehat{f} , \widehat{\psi_{s,t}} \rangle .
\]
Direct calculation of the Fourier transform shows that $\widehat{\psi_{s,t}}(\omega) = \widehat{\psi}(s\omega) e^{-2\pi i \omega \cdot t} s^{d/2}$, and so the previous formula says that
\[
0 = \big[ \widehat{f}(\cdot) \overline{\widehat{\psi}(s \cdot)} \, \big] \widehat{\ }(-t) \qquad \text{for each $t \in \Rd$.}
\]
Since the integrable function $\widehat{f}(\cdot) \overline{\widehat{\psi}(s \cdot)}$ has vanishing Fourier transform, it must equal zero a.e., which means
\[
0 = \widehat{f}(\omega) \overline{\widehat{\psi}(s\omega)} \qquad \text{for almost every $\omega \in \Rd$,}
\]
for each $s > 0$. 

Next we square and multiply by $|\omega|^d s^{d-1}$, and then integrate to show that
\begin{align*}
0 
& = \int_\Rd |\widehat{f}(\omega)|^2 \int_0^\infty |\widehat{\psi}(s\omega)|^2 |\omega|^d s^{d-1} \, ds d\omega \\
& = \int_{S^{d-1}} \int_0^\infty |\widehat{f}(r\xi)|^2 r^{d-1} \, dr \, \int_0^\infty |\widehat{\psi}(s\xi)|^2 s^{d-1} \, ds \, dS(\xi)
\end{align*}
by expressing $\omega=r\xi$ in spherical coordinates and then changing variable with $s \mapsto s/r$. The integrand therefore vanishes for almost every $\xi$, which proves equation \eqref{underlying}. 

Finally, if $\widehat{\psi}$ is nontrivial in almost every direction then $ \int_0^\infty |\widehat{\psi}(s\xi)|^2 s^{d-1} \, ds$ is positive for almost every $\xi$, and so (by the preceding formula) the integral $\int_0^\infty |\widehat{f}(r\xi)|^2 r^{d-1} \, dr$ must vanish for almost every $\xi$. Thus $\widehat{f}=0$ a.e.\ and so $f=0$ a.e. The argument works also with the roles of $\psi$ and $f$ interchanged.
\end{proof}

\section{\bf Uniqueness for $p$-integrable signals}

Next we treat signals in $L^p$. The Fourier transform of the signal is a distribution, when $p>2$. We show that distribution has support at the origin, which implies the signal must vanish.
\begin{thm}\label{p-int}
Let $1 \leq p < \infty$ and $\frac{1}{p}+\frac{1}{p\prime}=1$. Assume $f \in L^p(\Rd), \psi \in L^{p\prime}(\Rd), \widehat{\psi} \in C^\infty(\Rd \setminus \{ 0 \})$, and that the wavelet transform vanishes identically:
\[
\langle f , \psi_{s,t} \rangle = 0 \qquad \text{for all $s>0 , \quad t \in \Rd$.}
\]
If $\widehat{\psi}$ is nontrivial in every direction then $f = 0$ a.e.
\end{thm}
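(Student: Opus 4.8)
The plan is to reinterpret the vanishing of the wavelet transform as a family of convolution identities and then show that $\widehat{f}$, viewed as a tempered distribution, is supported at the origin. Writing $\langle f,\psi_{s,t}\rangle$ as a function of the translation $t$ exhibits it as a convolution,
\[
(W_\psi f)(s,t) = (f * g_s)(t), \qquad g_s(x) = s^{-d/2}\,\overline{\psi(-x/s)},
\]
so the hypothesis is equivalent to $f * g_s \equiv 0$ for every $s>0$. Since $f \in L^p$ and $g_s \in L^{p'}$ are H\"older conjugate, each convolution is a bounded continuous function and the integrals below converge absolutely. Because $L^p(\Rd) \subset \mathcal{S}'(\Rd)$, the transform $\widehat{f}$ is a well-defined tempered distribution, and a direct change of variables gives $\widehat{g_s}(\omega) = s^{d/2}\,\overline{\widehat{\psi}(s\omega)}$. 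The goal is then to prove $\operatorname{supp}\widehat{f} \subseteq \{0\}$.

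The heart of the argument is the local Fourier-side identity
\[
\big\langle \widehat{f},\; \overline{\widehat{\psi}(s\cdot)}\,\Phi \big\rangle = 0 \qquad \text{for all } \Phi \in C_c^\infty(\Rd \setminus \{0\}),\ s>0,
\]
where the pairing is the bilinear action of the distribution $\widehat{f}$ on a test function. Note that $m := \overline{\widehat{\psi}(s\cdot)}\,\Phi$ really is a test function: since $\widehat{\psi} \in C^\infty(\Rd \setminus \{0\})$ and $\Phi$ is supported away from the origin, the product is smooth and compactly supported. To prove the identity I would transfer the transform onto $f$ via $\langle \widehat{f}, m\rangle = \langle f, \widehat{m}\rangle = \int f\,\widehat{m}$, identify $\widehat{m}$ as a multiple of the convolution of the reflected $g_s$ with $\widehat{\Phi}$, and then interchange the order of integration — justified by H\"older's inequality as above — to recognize the result as the pairing of $f * g_s$ against $\widehat{\Phi}$. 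Since $f * g_s \equiv 0$, this vanishes.

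With the identity in hand, fix an arbitrary $\phi \in C_c^\infty(\Rd \setminus \{0\})$. For each point $r_0\xi_0$ of its compact support, nontriviality of $\widehat{\psi}$ in the direction $\xi_0$ provides some $\rho>0$ with $\widehat{\psi}(\rho\xi_0) \neq 0$; continuity of $\widehat{\psi}$ away from the origin then makes $\widehat{\psi}$ nonzero on a neighborhood of $\rho\xi_0$, so with the scale $s = \rho/r_0$ the dilate $\widehat{\psi}(s\cdot)$ is nonzero on a neighborhood of $r_0\xi_0$. By compactness finitely many such neighborhoods $U_j$, with scales $s_j$, cover $\operatorname{supp}\phi$. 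Taking a subordinate partition of unity and dividing each piece of $\phi$ by the smooth nonvanishing factor $\overline{\widehat{\psi}(s_j\cdot)}$ on $U_j$, the identity of the previous paragraph gives $\langle \widehat{f}, \phi\rangle = 0$. Hence $\operatorname{supp}\widehat{f} \subseteq \{0\}$, so $\widehat{f} = \sum_{|\alpha| \le N} c_\alpha \partial^\alpha \delta_0$ is a finite combination of derivatives of the Dirac mass; inverting the transform shows $f$ is a polynomial. Because $p < \infty$, the only polynomial lying in $L^p(\Rd)$ is $0$, and therefore $f = 0$ a.e.

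The step I expect to be the main obstacle is the localization and division: one must cover a compact set bounded away from the origin by finitely many regions on each of which a single dilate of $\widehat{\psi}$ stays bounded away from zero, and then invert that dilate smoothly. This is exactly where the present hypotheses earn their keep — nontriviality in \emph{every} direction (rather than almost every) guarantees a usable scale near each point of the support, and smoothness of $\widehat{\psi}$ permits the smooth division — in contrast to Proposition~\ref{integrable}, whose global integration over the sphere tolerated both a.e.\ directions and a merely $L^2$ Fourier transform.
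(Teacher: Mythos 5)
Your argument is correct, and it reaches the paper's intermediate goal --- $\operatorname{supp}\widehat{f}\subseteq\{0\}$, hence $f$ is a polynomial, hence $f=0$ since $p<\infty$ --- by a genuinely different mechanism. The paper builds an auxiliary Schwartz function $\mu$ with
\[
\widehat{\mu}(\omega) = \frac{\overline{\widehat{\phi}(\omega)}\,\lambda(\omega)}{\sum_{j\in\mathbb{Z}} |\widehat{\phi}(s^j\omega)|^2 \lambda(s^j\omega)},
\]
so that $\sum_j \widehat{\phi}(s^j\omega)\widehat{\mu}(s^j\omega)=1$ on $\Rd\setminus\{0\}$ --- a discrete Calder\'{o}n reproducing formula with a single scale ratio $s$ summed over all integer powers --- and then pairs $f$ against the resulting convolution identity, so that every term becomes a vanishing wavelet coefficient. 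You instead prove the single local identity $\langle\widehat{f},\overline{\widehat{\psi}(s\cdot)}\,\Phi\rangle=0$ directly from $f*g_s\equiv 0$, and then kill an arbitrary test function $\phi$ supported away from the origin by a partition of unity and smooth division, using one scale $s_j$ per piece. Both proofs rest on the same covering step (nontriviality in \emph{every} direction plus smoothness of $\widehat{\psi}$ off the origin yields, near each point of a compact set avoiding $0$, a dilate of $\widehat{\psi}$ bounded away from zero), and both require the same distributional care in identifying the Fourier transform of $\widehat{g_s}\Phi$ as a convolution when $p'>2$ --- your H\"older/Fubini justification handles this at the same level of rigor as the paper's Steps 3--4. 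What the paper's route buys is a reusable reproducing formula tied to the Str\"{o}mberg--Torchinsky literature it cites, with the ``inversion'' of $\widehat{\psi}$ packaged globally into the denominator of $\widehat{\mu}$; what your route buys is economy --- no global positivity of a denominator to verify, no sum over $j\in\mathbb{Z}$, just finitely many local divisions. One small point of bookkeeping to watch in a full write-up is the reflection in $g_s(x)=s^{-d/2}\overline{\psi(-x/s)}$ versus the reflected kernel appearing in $\widehat{m}$; you flag it correctly, and the signs do work out.
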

The smoothness hypothesis on $\widehat{\psi}$ away from the origin simply means that some function $\nu \in C^\infty(\Rd \setminus \{ 0 \})$ represents the distribution $\widehat{\psi}$ when acting on test functions $\eta \in C^\infty_c(\Rd \setminus \{ 0 \})$; in other words $\widehat{\psi}[\eta] = \int_\Rd \nu(\omega) \eta(\omega) \, d\omega$. We will write $\widehat{\psi}$ to mean both the distributional Fourier transform and the function $\nu$, as there will be no danger of confusion.

This smoothness hypothesis on $\widehat{\psi}$ is satisfied if $\psi$ has compact support, because then $\widehat{\psi}$ is smooth on all of $\Rd$, including at the origin. 

For a non-compactly supported example, $\psi$ could be a Gaussian or one of its derivatives (such as the Mexican hat, the negative second derivative of the Gaussian), in which case the Fourier transform is smooth on all of $\Rd$. 

For an example where $\widehat{\psi}$ is not smooth at the origin, suppose $\psi(x)=1/\pi(1+x^2)$, which is the Poisson kernel in $1$ dimension. Then $\widehat{\psi}(\xi)=e^{-2\pi|\xi|}$ is smooth away from the origin, but not at the origin. Similarly if $\psi$ is the first derivative of the Poisson kernel (in which case $\psi$ has integral equal to zero) then $\widehat{\psi}(\xi)=2\pi\xi e^{-2\pi|\xi|}$, which is again smooth except at the origin. 

\begin{proof}[Proof of Theorem~\ref{p-int}]
We will show that the tempered distribution $\widehat{f}$ is supported at the origin. Then $f$ is a polynomial (see \cite[Corollary~2.4.2]{G08}), after suitable redefinition on a set of measure zero. Since $f$ belongs to $L^p$ by hypothesis, we conclude that the polynomial must be identically zero, as claimed in the theorem.

To show $\widehat{f}$ is supported at the origin, we start with a Schwartz function $\eta$ supported in $\Rd \setminus \{ 0 \}$. We must show $\widehat{f}[\eta]=0$, that is, $f[\widehat{\eta}]=0$.

\medskip
Write $\phi = \overline{\psi}$, so that (by hypothesis) $\widehat{\phi}$ is nontrivial in every direction.  The proof proceeds in a number of steps.

\medskip
Step 1. [Cut-off function.] For each unit vector $\xi^\prime$, choose $r>0$ such that $\widehat{\phi}(r\xi^\prime) \neq 0$.  By continuity of $\widehat{\phi}$, there exists a neighborhood $\Xi$ of $\xi^\prime$ on the unit sphere and a number $s>1$ such that $\widehat{\phi}(q\xi) \neq 0$ for all $\xi \in \Xi$ and all $q \in [r,sr]$. Cover the sphere with finitely many such neighborhoods $\Xi_1,\dots,\Xi_n$ having corresponding values $r_1,\dots,r_n$ and $s_1,\dots,s_n$. 

Choose a nonnegative function $\lambda \in C^\infty_c(\Rd \setminus \{ 0 \})$ such that $\lambda(q\xi)>0$ whenever $q \in [r_k,s_k r_k], \xi \in \Xi_k, k=1,\dots,n$; for example, one could take $\lambda$ to be a radially symmetric ``annular bump'' function that is zero near the origin and positive from radius $\min_k r_k$ out to radius $\max_k s_k r_k$.

\medskip
Step 2. [Satisfying the Calder\'{o}n condition.] Take $s = \min(s_1,\dots,s_n)$, and define a Schwartz function $\mu$ by letting its Fourier transform be
\[
\widehat{\mu}(\omega) = \frac{ \overline{\widehat{\phi}(\omega)} \lambda(\omega)}{ \sum_{j \in \mathbb{Z}} |\widehat{\phi}(s^j \omega)|^2 \lambda(s^j \omega)} , \qquad \omega \in \Rd \setminus \{ 0 \} .
\]
Note the term with $j=0$ in the denominator is positive at every point $\omega= q\xi$ with $q \in [r_k,s r_k], \xi \in \Xi_k, k=1,\dots,n$, and so by summing over $j$ we see the denominator is positive for every $\omega \neq 0$. Further, the series in the denominator converges because it involves only finitely many $j$ values (for each $\omega$), due to the compact support of $\lambda$ in $\Rd \setminus \{ 0 \}$. 

Hence 
\[
\sum_{j \in \mathbb{Z}} \widehat{\phi}(s^j \omega) \widehat{\mu}(s^j \omega) = 1 , \qquad \omega \in \Rd \setminus \{ 0 \} .
\]

\medskip
Step 3. [Calder\'{o}n reproducing formula.] Multiplying the result of Step 2 by $\eta(-\omega)$ shows that 
\[
\sum_{j \in \mathbb{Z}} \widehat{\phi}(s^j \omega) \widehat{\mu}(s^j \omega) \eta(-\omega) = \eta(-\omega) .
\]
Only a finite range of $j$-values (independently of $\omega$) is needed in the sum, because both $\eta$ and $\widehat{\mu}$ have compact support in $\Rd \setminus \{ 0 \}$. 

Applying the inverse Fourier transform gives a convolution formula:
\[
\sum_{j \in \mathbb{Z}} \phi_{s^j} * \mu_{s^j} * \widehat{\eta} = \widehat{\eta} ,
\]
where we use the notation $\phi_{s^j}(t)=\phi(t/s^j)/s^{jd}$ and so on. Convergence of the sum is guaranteed, because only finitely many $j$-values are summed. 

\medskip
Step 4. [Applying the reproducing formula.]  Hence the distribution $f$ acts on the Schwartz function $\widehat{\eta}$ according to
\begin{align*}
f[\widehat{\eta}]
& = \int_\Rd f(x) \widehat{\eta}(x) \, dx \\
& = \sum_{j \in \mathbb{Z}} \int_\Rd f(x) (\phi_{s^j} * \mu_{s^j} * \widehat{\eta})(x) \, dx \qquad \text{by Step 3} \\
& = \sum_{j \in \mathbb{Z}} \int_\Rd \int_\Rd f(x) \phi_{s^j}(x-t) \, (\mu_{s^j} * \widehat{\eta})(t) \, dt dx \\
& = \sum_{j \in \mathbb{Z}} \int_\Rd s^{-jd/2} \langle f , \psi_{s^j,t} \rangle \, (\mu_{s^j} * \widehat{\eta})(t) \, dt \\
& = 0 ,
\end{align*}
since $\langle f , \psi_{s^j,t} \rangle = 0$ by the hypothesis that the wavelet transform vanishes. 

Thus the distribution $\widehat{f}$ is supported at the origin, as we needed to show.
\end{proof}

\subsection*{Comment on the literature.} The construction of $\mu$ in Steps 1 and 2 originated in Calder\'{o}n's work on his reproducing formula \cite{C64}. The discrete form used above (involving sums rather than integrals, over the scales) is a special case of a result by Str\"{o}mberg and Torchinsky \cite[Chapter V, Lemma 6]{ST89} (and see also \cite{H74}).

Both the continuous and discrete Calder\'{o}n reproducing formulas play an important role in the characterization of classical function spaces in mathematical analysis \cite{BPT96,BPT97,FJW91,W10}.

\section{\bf Uniqueness for bounded or polynomially bounded signals}

Next we treat signals in $L^\infty$. Uniqueness of bounded signals will hold only up to additive constants. More generally, we handle signals that grow polynomially. 
\begin{thm}\label{bounded}
Assume $f$ is locally integrable with at most polynomial growth, meaning $f(x)(1+|x|)^{-k} \in L^\infty(\Rd)$ for some nonnegative integer $k$.  Assume $\psi$ is integrable with $\psi(x)(1+|x|)^k \in L^1(\Rd)$ and $\widehat{\psi} \in C^\infty(\Rd \setminus \{ 0 \})$, and suppose the wavelet transform vanishes identically:
\[
\langle f , \psi_{s,t} \rangle = 0 \qquad \text{for all $s>0 , \quad t \in \Rd$.}
\]
If $\widehat{\psi}$ is nontrivial in every direction, then $f$ is equal almost everywhere to a polynomial of degree $\leq k$. For example, if $f$ is bounded ($k=0$) then $f$ must be constant.
\end{thm}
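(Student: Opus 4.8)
The plan is to adapt the argument of Theorem~\ref{p-int} to the polynomially bounded setting, showing again that the distributional Fourier transform $\widehat{f}$ is supported at the origin, whence $f$ agrees a.e.\ with a polynomial; the degree bound will then come from the growth hypothesis. First I would observe that $f$ is a tempered distribution, since $f(x)(1+|x|)^{-k} \in L^\infty$ guarantees that $f$ pairs continuously against Schwartz functions. The weighted integrability hypothesis $\psi(x)(1+|x|)^k \in L^1$ is exactly what makes the pairing $\langle f, \psi_{s,t} \rangle$ absolutely convergent for each fixed $s,t$: the polynomial growth of $f$ is controlled by the weighted decay of $\psi$.

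Next I would reuse Steps 1--3 of the proof of Theorem~\ref{p-int} verbatim, since those steps depend only on the wavelet $\psi$ (through $\phi = \overline{\psi}$ and its nontrivial, smooth-away-from-origin Fourier transform) and on the test function $\eta$, not on the signal at all. This produces the same auxiliary Schwartz function $\mu$ and the same discrete Calder\'{o}n reproducing formula
\[
\sum_{j \in \mathbb{Z}} \phi_{s^j} * \mu_{s^j} * \widehat{\eta} = \widehat{\eta} ,
\]
with only finitely many nonzero terms. Then, exactly as in Step 4, I would apply the distribution $f$ to $\widehat{\eta}$, unfold the convolutions, and recognize each inner integral as a multiple of $\langle f , \psi_{s^j,t} \rangle$, which vanishes by hypothesis. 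This gives $f[\widehat{\eta}] = 0$ for every Schwartz $\eta$ supported away from the origin, i.e.\ $\widehat{f}$ is supported at $\{0\}$, so $f$ equals a polynomial a.e.

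Finally I would extract the degree bound. A polynomial of degree $m$ grows like $|x|^m$, and the hypothesis $f(x)(1+|x|)^{-k} \in L^\infty$ forces $m \leq k$; any monomial of degree exceeding $k$ would violate the boundedness of $f(x)(1+|x|)^{-k}$. The special case $k=0$ then gives that a bounded $f$ is constant.

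I expect the main obstacle to lie not in the reproducing-formula machinery, which transfers directly, but in the \emph{justification of the interchange of summation, integration, and the distributional pairing} in Step 4. In Theorem~\ref{p-int} the pairings were genuine $L^p$--$L^{p'}$ integrals, whereas here $f$ is only polynomially bounded and must be handled as a tempered distribution; I would need to verify that $\mu_{s^j} * \widehat{\eta}$ is Schwartz (so the pairing makes sense) and that the absolute convergence of $\langle f , \psi_{s^j,t} \rangle$ in $t$, together with the finiteness of the $j$-sum, legitimizes writing $f[\widehat{\eta}]$ as an iterated integral against $f$. The weighted hypotheses on $f$ and $\psi$ are precisely calibrated to make these Fubini-type manipulations valid, and confirming this carefully is the crux of the argument.
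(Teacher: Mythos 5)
Your proposal is correct and follows essentially the same route as the paper, which simply states that the proof of Theorem~\ref{p-int} carries over verbatim except that the polynomial obtained from $\operatorname{supp}\widehat{f}\subseteq\{0\}$ can no longer be concluded to vanish, only to have degree at most $k$ by the growth hypothesis. Your additional remarks on justifying the distributional pairing and the Fubini-type interchanges in Step 4 address exactly the details the paper leaves implicit.
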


\begin{proof}[Proof of Theorem~\ref{bounded}]
The proof proceeds exactly as for Theorem~\ref{p-int}, except that in the first paragraph of the proof, we do not know $f$ belongs to $L^p$ and so we cannot conclude the polynomial is identically zero. Instead, we simply conclude the polynomial has degree at most $k$. 
\end{proof}

\subsection*{Vanishing moments} The conclusion of the theorem forces the wavelet $\psi$ to have vanishing moments. For example, if $f$ is bounded (the case $k=0$) then $f$ is constant by the theorem; and so either $f$ is identically zero or else $\psi$ has integral zero, 
\[
\int_\Rd \psi(x) \, dx = 0,
\]
because of the hypothesis that $\langle f , \psi_{1,0} \rangle = 0$. 

For higher moments, let us consider the $1$ dimensional case and write the polynomial $f$ as $f(x)=\sum_{\ell=0}^m c_\ell x^\ell$ for some coefficients $c_\ell$, where $m=\deg(f)$. Suppose $f$ is not identically zero, so that the leading coefficient is nonzero, $c_m \neq 0$. Then $\psi$ has vanishing moments up to order $m$, meaning
\[
\int_\R x^\ell \psi(x) \, dx = 0 , \qquad \ell = 0,1,\dots,m,
\]
we now show. From $\langle f , \psi_{1,t} \rangle = 0$ we deduce that $\sum_{\ell=0}^m c_\ell \int_\R (x+t)^\ell \, \overline{\psi(x)} \, dx = 0$. Differentiating $m$ times with respect to $t$ and then setting $t=0$ shows that $\int_\R \psi(x) \, dx = 0$, since $c_m \neq 0$. Differentiating $m-1$ times with respect to $t$ and setting $t=0$ then shows that $\int_\R x \psi(x) \, dx = 0$. Repeating this argument down to the $0$th derivative establishes the claimed vanishing moments. 

\smallskip
We conclude by extending the uniqueness result to signals that are general tempered distributions.
\begin{thm}\label{distribution}
Assume $f$ is a tempered distribution, $\psi$ is a Schwartz function, and the wavelet transform vanishes identically:
\[
\langle f , \psi_{s,t} \rangle = 0 \qquad \text{for all $s>0 , \quad t \in \Rd$.}
\]
If $\widehat{\psi}$ is nontrivial in every direction, then $f$ is a polynomial. 
\end{thm}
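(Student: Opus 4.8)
The plan is to run the argument of Theorem~\ref{p-int} essentially unchanged, the goal again being to show that the tempered distribution $\widehat{f}$ is supported at the origin; a tempered distribution whose Fourier transform is supported at the origin is a polynomial (see \cite[Corollary~2.4.2]{G08}), and so $f$ is a polynomial. The difference from Theorems~\ref{p-int} and \ref{bounded} is that here there is no integrability or growth hypothesis forcing the polynomial to vanish or to have bounded degree, so the conclusion is simply that $f$ is a polynomial, with no further reduction.

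First I would verify that the Calder\'{o}n machinery of Steps~1--3 applies in this setting. Since $\psi$ is Schwartz, its Fourier transform $\widehat{\psi}$ lies in $C^\infty(\Rd)$ — smooth even at the origin — so the smoothness hypothesis needed for the construction holds automatically, and with $\phi=\overline{\psi}$ the function $\widehat{\phi}$ is smooth and, by assumption, nontrivial in every direction. The cut-off function $\lambda$, the dual Schwartz function $\mu$ satisfying $\sum_{j\in\mathbb{Z}} \widehat{\phi}(s^j\omega)\widehat{\mu}(s^j\omega)=1$ for $\omega\neq 0$, and the reproducing formula $\sum_{j\in\mathbb{Z}}\phi_{s^j}*\mu_{s^j}*\widehat{\eta}=\widehat{\eta}$ are then produced exactly as before, for any Schwartz $\eta$ supported in $\Rd\setminus\{0\}$. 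As in Theorem~\ref{p-int}, the functions $\mu$ and $\widehat{\eta}$ are Schwartz, every convolution appearing is Schwartz, and the sum over $j$ is finite (uniformly in $\omega$) because $\widehat{\mu}$ and $\eta$ have compact support away from the origin.

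The one step requiring modification is the application of the reproducing formula in Step~4, and this is where I expect the only genuine (though routine) obstacle. For $L^p$ signals the interchange of $f$ with the $t$-integral was an instance of Fubini's theorem, but now $f$ is an abstract tempered distribution, so I must justify the identity
\[
f\big[\phi_{s^j}*\mu_{s^j}*\widehat{\eta}\big] = \int_\Rd (\mu_{s^j}*\widehat{\eta})(t)\, f\big[\phi_{s^j}(\cdot-t)\big]\, dt .
\]
I would do this by regarding the Schwartz function $\phi_{s^j}*(\mu_{s^j}*\widehat{\eta})$ as a vector-valued integral $\int_\Rd (\mu_{s^j}*\widehat{\eta})(t)\,\phi_{s^j}(\cdot-t)\,dt$ that converges in the topology of the Schwartz space — approximating by Riemann sums and checking convergence in every Schwartz seminorm — and then pulling the continuous linear functional $f$ inside the integral. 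Since $f\big[\phi_{s^j}(\cdot-t)\big] = s^{-jd/2}\langle f,\psi_{s^j,t}\rangle = 0$ for every $t$ by the vanishing of the wavelet transform, each summand is zero, so $f[\widehat{\eta}]=\widehat{f}[\eta]=0$ for every Schwartz $\eta$ supported away from the origin. This proves $\widehat{f}$ is supported at the origin and finishes the argument.
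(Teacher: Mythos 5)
Your proposal is correct and matches the paper's intent exactly: the paper itself states that the proof ``requires only a rephrasing into distributional language of Step 4'' of Theorem~\ref{p-int}, and you have supplied precisely that rephrasing, correctly identifying that the only point needing care is passing the continuous functional $f$ through the vector-valued integral $\int_\Rd (\mu_{s^j}*\widehat{\eta})(t)\,\phi_{s^j}(\cdot-t)\,dt$, which converges in the Schwartz topology. Nothing further is needed.
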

The proof requires only a rephrasing into distributional language of Step 4 in the proof of Theorem~\ref{p-int}. We leave this task to the reader.

\section*{Acknowledgments} We thank Y. Sun and M. Sundararajan for asking us about the continuous wavelet uniqueness problem, and explaining its implications in economic problems.

Laugesen thanks the Department of Mathematics and Statistics at the University of Canterbury, New Zealand, for hosting him during this research.  

\vspace*{12pt}

\medskip

\end{document}